\providecommand{\U}[1]{\protect\rule{.1in}{.1in}}
\numberwithin{equation}{section}
\newtheorem{theorem}{Theorem}[section]
\newtheorem{lemma}[theorem]{Lemma}
\begin{document}
\title[Lower bounds for the 3D Navier-Stokes equations ]{Lower bounds on the blow-up rate of the 3D Navier-Stokes equations in
H\symbol{94}\{5/2\}}
\author{Abdelhafid Younsi}
\address{Department of Mathematics and Computer Science, University of Djelfa , Algeria.}
\email{younsihafid@gmail.com}
\subjclass[2010]{ 35Q30, 35B44}
\keywords{Blow-up rate; Lower bounds; Navier-Stokes equations}

\begin{abstract}
Under assumption that $T^{\ast}$ is the maximal time of existence of smooth
solution of the 3D Navier-Stokes equations in the Sobolev space $H^{s}$, we
establish lower bounds for the blow-up rate of the type$\ \left(  T^{\ast
}-t\right)  ^{-\varphi\left(  n\right)  }$, where $n$ is a natural number
independent of $s$ and $\varphi$ is a linear function. Using this new type in
the 3D Navier-Stokes equations in the $H^{5/2}$, both on the whole space and
in the periodic case, we give an answer to a question left open by James et al
(2012, J. Math. Phys.). We also prove optimal lower bounds for the blow-up
rate in $\dot{H}^{3/2}$ and in $H^{1}$.

\end{abstract}
\maketitle

\section{Introduction}

We consider, in this paper, the 3D incompressible Navier-Stokes equations%
\begin{equation}%
\begin{array}
[c]{c}%
\dfrac{\partial u}{\partial t}+u.\nabla u=-\nabla p+\nu\triangle u,\text{ in
}\Omega\times\left(  0,\infty\right) \\
\text{div }u=0,\text{ in }\Omega\times\left(  0,T\right)  \text{ and }u\left(
x,0\right)  =u_{0},\text{ in }\Omega\text{,}%
\end{array}
\label{1}%
\end{equation}
where $u=u\left(  x,t\right)  \ $is the velocity vector field, $p$ is the
pressure and $\nu$ is the viscosity of the fluid. The domain $\Omega$ may have
periodic boundary conditions or $\Omega=%
\mathbb{R}
^{3}$.

For small data $\left\Vert \nabla u\right\Vert _{L^{2}}\leq c\left(
\nu\right)  $ the global existence of strong solutions for the 3D
Navier-Stokes equations it is well known, see Constantin \cite[Theorem 9.3 P
80]{2}. But for the 3D Navier-Stokes equations with large data, we don't have
a result of global existence. Under the assumption that the solution of the
three-dimensional Navier-Stokes equations becomes irregular at finite time
$T^{\ast}$ Leray 1934 \cite[P 224]{4} proved that there exists a constant
$c\left(  \nu\right)  >0$ such that
\begin{equation}
\left\Vert \nabla u\left(  .,t\right)  \right\Vert _{\dot{H}^{1}\left(
\mathbb{R}
^{3}\right)  }^{4}\geq\frac{c\left(  \nu\right)  }{\left(  T^{\ast}-t\right)
}. \label{2}%
\end{equation}
In 2010, Benameur \cite[Theorem 1.3.]{1} showed in the whole space
\begin{equation}
\left\Vert u\left(  .,t\right)  \right\Vert _{\dot{H}^{s}\left(
\mathbb{R}
^{3}\right)  }\geq c\left(  s\right)  \frac{\left\Vert u\left(  .,t\right)
\right\Vert _{L^{2}\left(
\mathbb{R}
^{3}\right)  }^{\frac{3-2s}{3}}}{\left(  T^{\ast}-t\right)  ^{\frac{s}{3}}%
}\text{ with }s>\frac{5}{2}. \label{3}%
\end{equation}
The result above was improved by Robinson, Sadowski, and Silva in \cite{5} to
\begin{equation}
\left\Vert u\left(  .,t\right)  \right\Vert _{\dot{H}^{s}\left(
\Omega\right)  }\geq c\left(  s\right)  \frac{\left\Vert u_{0}\right\Vert
_{L^{2}\left(  \Omega\right)  }^{\frac{5-2s}{5}}}{\left(  T^{\ast}-t\right)
^{\frac{2s}{3}}}\text{ with }\Omega=\left[  0,1\right]  ^{3}\text{ or \ }%
\mathbb{R}
^{3}\text{.} \label{4}%
\end{equation}
In the homogeneous Sobolev space $\dot{H}^{5/2}\left(  \mathbb{T}^{3}\right)
$ of real valued periodic functions, Cortissoz, Montero, \& Pinilla 2014
\cite[Theorem 1.3.Page 2]{3} proved lower bounds on the blow up with
logarithmic corrections,
\begin{equation}
\left\Vert u\left(  .,t\right)  \right\Vert _{\dot{H}^{5/2}\left(
\mathbb{T}^{3}\right)  }\geq\frac{c}{\left(  T^{\ast}-t\right)  \left\vert
\lg\left(  T^{\ast}-t\right)  \right\vert }\text{\ with }T^{\ast}-t\neq1.
\label{5}%
\end{equation}
In the case of the $\dot{H}^{3/2}\left(  Q\right)  -$norm, (see also the work
of Robinson et al \cite{5}) it is known that for every $\varepsilon>0$ there
exists a $c_{\frac{3}{2},\varepsilon}>0$ such that
\begin{equation}
\left\Vert u\left(  .,t\right)  \right\Vert _{\dot{H}^{3/2}}^{4+\frac{\gamma
}{3}}\geq\frac{c_{\frac{3}{2},\varepsilon}}{\left(  T^{\ast}-t\right)
^{\frac{1}{2}-\varepsilon}}. \label{6}%
\end{equation}
Recently, Cortissoz et al \cite[Theorem 1.1.Page 2]{3} showed that%
\begin{equation}
\left\Vert u\left(  .,t\right)  \right\Vert _{\dot{H}^{3/2}\left(
\mathbb{T}^{3}\right)  }\geq\frac{c}{\sqrt{\left(  T^{\ast}-t\right)
\left\vert \lg\left(  T^{\ast}-t\right)  \right\vert }}. \label{7}%
\end{equation}
In this paper, we concentrate on the case of estimating lower bounds for the
blow-up rate in $\dot{H}^{5/2}\left(  \mathbb{T}^{3}\right)  $ and $\dot
{H}^{3/2}\left(  Q\right)  $ of a possible blow-up solution to 3D
Navier-Stokes equations. First we prove a new lower bound on blowup solutions
in the $\dot{H}^{5/2}$-norm both on the whole space and in the periodic case.
This result gives a response to the question left open in \cite{5} and
improves previous known lower bounds. Therefore is possible to prove the
correct rate of blow up $(\ref{6})$ in $\dot{H}^{3/2}\left(  Q\right)  $,
which is, in this case, $\left(  T^{\ast}-t\right)  ^{-\frac{1}{2}}$, see
\cite[SecV.A P11)]{5}. Finally, we improve the order $\frac{-1}{4}$ in the
result $(\ref{2})$ to get a rate of the order $-1$. We prove that is possible
to get a rate of blowup of the type $\left(  T^{\ast}-t\right)  ^{-1}$ in
several spaces $\dot{H}^{5/2}$, $\dot{H}^{3/2}\ $and $\dot{H}^{1}$ for $t\leq
T_{\ast}$\ with $T_{\ast}<T^{\ast}$. Those estimates are particularly useful
for obtaining a control of degree $-1$ for the comportment of strong solutions
before the moment of the blow up.

The technique used in this paper are fairly standard based on the properties
of trigonometric functions. This method can be used to estimate lower bounds
on solutions that blowup at some finite time $T^{\ast}>0$ for the strict
positive solution of ordinary differential inequality of the type $y^{\prime
}\leq y^{n}$ with $0<n\leq3$.

\section{Main Result}

Let $Q=\left[  0,2\pi\right]  ^{3}$, we write $%
\mathbb{Z}
^{3}=%
\mathbb{Z}
^{3}/\left\{  0,0,0\right\}  $, let $\dot{H}^{s}\left(  Q\right)  $ be the
subspace of the Sobolev space $H^{s}$ consisting of divergence-free,
zero-average, periodic real functions,
\begin{equation}
\dot{H}^{s}\left(  Q\right)  =\left\{  u=\sum_{\xi\in\dot{%
\mathbb{Z}%
}^{3}}\hat{u}_{\xi}e^{-i\xi.x}:\ \overline{\hat{u}_{\xi}}=\hat{u}_{-\xi}\text{
},\sum_{\xi}\left\vert \xi\right\vert ^{2s}\left\vert \hat{u}_{\xi}\right\vert
^{2}<\infty\ \text{and }\xi.\hat{u}_{\xi}=0\right\}  \label{8}%
\end{equation}
and equip $\dot{H}^{s}\left(  Q\right)  \ $ with the norm%
\begin{equation}
\left\Vert u\right\Vert _{\dot{H}^{s}}^{2}=\left\Vert u\right\Vert _{\dot
{H}^{s}\left(  Q\right)  }^{2}=\sum_{\xi}\left\vert \xi\right\vert
^{2s}\left\vert \hat{u}_{\xi}\right\vert ^{2}. \label{9}%
\end{equation}
On the whole space the corresponding definition of the $\dot{H}^{s}\left(
\mathbb{R}
^{3}\right)  $ norm is
\begin{equation}
\left\Vert u\right\Vert _{\dot{H}^{s}\left(
\mathbb{R}
^{3}\right)  }^{2}:=\int_{%
\mathbb{R}
^{3}}\left\vert \xi\right\vert ^{2s}\left\vert \hat{u}\left(  \xi\right)
\right\vert ^{2}d\xi<\infty, \label{10}%
\end{equation}
where $F[u](\xi)=\hat{u}(\xi)=\int_{%
\mathbb{R}
^{n}}e^{-i\xi..x}\left\vert u\left(  x\right)  \right\vert dx$ is the Fourier
transform of $u$, for more details see \cite{5}. We prove our estimate in the
periodic case, but it also holds in the full space. Throughout the paper,
$c_{i},i\in%
\mathbb{N}
$ , denotes a positive constant. We start by proving the following lemma

\begin{lemma}
For any real numbers $T^{\ast}>0$ there exists $m\in%
\mathbb{N}
^{\star}$ such that we have
\begin{equation}
\left(  T^{\ast}-t\right)  \leq m^{n}\left(  T^{\ast}-t\right)  ^{n+1}
\label{10a}%
\end{equation}
for all $t\in\left[  0,T^{\ast}-\dfrac{1}{m}\right]  $ and $n\in%
\mathbb{N}
^{\star}$.
\end{lemma}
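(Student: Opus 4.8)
The plan is to notice that, since $T^{\ast}-t>0$ for every $t<T^{\ast}$, the asserted inequality $(\ref{10a})$ is equivalent, after dividing through by $T^{\ast}-t$, to $m^{n}\left(T^{\ast}-t\right)^{n}\geq 1$, i.e.\ to $\left(m\left(T^{\ast}-t\right)\right)^{n}\geq 1$, and hence ultimately to the single scalar inequality $m\left(T^{\ast}-t\right)\geq 1$. So the entire content of the lemma is reduced to choosing $m$ appropriately and then invoking monotonicity of the power map.

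First I would use the Archimedean property of $\mathbb{R}$ to fix some $m\in\mathbb{N}^{\star}$ with $m\geq 1/T^{\ast}$. This choice does two things at once: it makes $T^{\ast}-\tfrac1m\geq 0$, so that the interval $\left[0,T^{\ast}-\tfrac1m\right]$ is nonempty and contained in $[0,T^{\ast})$ (otherwise the statement would be vacuous), and it guarantees that for every $t\in\left[0,T^{\ast}-\tfrac1m\right]$ one has $T^{\ast}-t\geq \tfrac1m$, equivalently $m\left(T^{\ast}-t\right)\geq 1$. Then, since $n\geq 1$ and $x\mapsto x^{n}$ is nondecreasing on $[1,\infty)$, raising to the $n$-th power yields $m^{n}\left(T^{\ast}-t\right)^{n}\geq 1$; multiplying both sides by $T^{\ast}-t>0$ gives exactly $m^{n}\left(T^{\ast}-t\right)^{n+1}\geq T^{\ast}-t$, which is $(\ref{10a})$, valid simultaneously for all admissible $t$ and all $n\in\mathbb{N}^{\star}$.

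There is no real obstacle in this argument; the only step deserving a moment's attention is the selection of $m$ large enough that the interval on which the inequality is claimed is actually nonempty (which forces $m\geq 1/T^{\ast}$ rather than merely $m\geq 1$), after which everything follows from monotonicity of the power function and the positivity of $T^{\ast}-t$ on $[0,T^{\ast})$.
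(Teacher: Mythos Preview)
Your proof is correct and rests on exactly the same key observation as the paper's: use the Archimedean property to pick $m\in\mathbb{N}^{\star}$ with $m\geq 1/T^{\ast}$, so that $m(T^{\ast}-t)\geq 1$ for every $t\in\left[0,T^{\ast}-\tfrac{1}{m}\right]$. The only difference is in packaging: the paper argues by induction on $n$ (proving the case $n=1$ by multiplying $\tfrac{1}{m}\leq T^{\ast}-t$ through by $T^{\ast}-t$, then in the inductive step multiplying the hypothesis by $T^{\ast}-t$ and invoking the base case), whereas you bypass the induction by noting at once that $m(T^{\ast}-t)\geq 1$ implies $\bigl(m(T^{\ast}-t)\bigr)^{n}\geq 1$ via monotonicity of $x\mapsto x^{n}$ on $[1,\infty)$. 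Your route is a touch more economical, but the mathematical content is identical.
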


\begin{proof}
We will prove $(\ref{10a})$ by induction. We show that the statement
$(\ref{10a})$ holds for $n=1$. Since $%
\mathbb{R}
_{\ast}^{+}$ is archimedean then for any finite real number $T^{\ast}$
strictly positive there exists $m\in%
\mathbb{N}
^{\bigstar}$ such that $T^{\ast}\geq\frac{1}{m}$. For any real $t$ such that
$0$ $\leq$ $t\leq T^{\ast}-\frac{1}{m}$, the following inequality holds%
\begin{equation}
\frac{1}{m}\leq\left(  T^{\ast}-t\right)  . \label{11a}%
\end{equation}
Since $\left(  T^{\ast}-t\right)  \geq0$, multiplying $(\ref{11a})$ by
$\left(  T^{\ast}-t\right)  $ we find
\begin{equation}
\left(  T^{\ast}-t\right)  \leq m\left(  T^{\ast}-t\right)  ^{2}, \label{13a}%
\end{equation}
thus $(\ref{10a})$ is true for $n=1$. Let $k\in%
\mathbb{N}
^{\star}$ be given and suppose $(\ref{10a})$ is true for $n=k$. Then
\begin{equation}
\left(  T^{\ast}-t\right)  \leq m^{k}\left(  T^{\ast}-t\right)  ^{k+1}.
\label{14a}%
\end{equation}
Multiplying the induction hypothesis $(\ref{14a})$ by $\left(  T^{\ast
}-t\right)  $ we find%
\begin{equation}
\left(  T^{\ast}-t\right)  ^{2}\leq m^{k}\left(  T^{\ast}-t\right)  ^{k+2}.
\label{15a}%
\end{equation}
Using inequality $(\ref{13a})$, we can rewrite $(\ref{15a})$ as
\begin{equation}
\left(  T^{\ast}-t\right)  \leq m^{k+1}\left(  T^{\ast}-t\right)  ^{k+2}
\label{16a}%
\end{equation}
Thus, $(\ref{10a})$ holds for $n=k+1$, and the proof of the induction step is
complete. Wich prove that $(\ref{10a})$ is true for all $n\in%
\mathbb{N}
^{\star}$.
\end{proof}

Note that the lemma above holds if we replace $m$ with $r\in%
\mathbb{R}
_{+}^{\star}$ and $r\geq1$.

\begin{theorem}
Let $u\left(  .,t\right)  \in\dot{H}^{5/2}\left(  Q\right)  $ be a smooth
Leray-Hopf solution of the 3D Navier-Stokes equations $(\ref{1})$ with non
zero $\left\Vert u_{0}\right\Vert _{\dot{H}^{5/2}}$ and\ with maximal interval
of existence $(0,T^{\ast})$, $T^{\ast}<\infty$. Then there exists a positive
time $T_{\ast}$ $<T^{\ast}$and a positive constant $\eta_{1}>0$ such that%
\begin{equation}
\left\Vert u\left(  .,t\right)  \right\Vert _{\dot{H}^{5/2}}\geq\frac{\eta
_{1}}{\left(  T^{\ast}-t\right)  ^{\frac{n+1}{2}}}\text{ for }t\leq T_{\ast}.
\label{11}%
\end{equation}

\end{theorem}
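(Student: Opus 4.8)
The plan is to start from a differential inequality for the $\dot H^{5/2}$-norm of the solution and combine it with the elementary Lemma just proved. The first step is to recall (or re-derive) the standard energy-type estimate: testing the Navier-Stokes equations in $\dot H^{5/2}$ and using the fact that in three dimensions the nonlinear term can be controlled by a cubic power of the norm, one obtains an inequality of the form
\begin{equation}
\frac{d}{dt}\left\Vert u(\cdot,t)\right\Vert_{\dot H^{5/2}}^{2}\le c_{1}\left\Vert u(\cdot,t)\right\Vert_{\dot H^{5/2}}^{2n}
\end{equation}
for an appropriate $n$ with $0<n\le 3$ (this is exactly the ODE inequality $y'\le y^{n}$ mentioned in the introduction). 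Setting $y(t)=\left\Vert u(\cdot,t)\right\Vert_{\dot H^{5/2}}^{2}$, integrating $y'\le c_1 y^{n}$ from $t$ to $T^{\ast}$, and using that $y(t)\to\infty$ as $t\to T^{\ast}$ (blow-up in $\dot H^{5/2}$, which holds since $T^{\ast}$ is the maximal existence time and $\dot H^{5/2}\subset \dot H^{3/2}$ is a critical-type space), I would get
\begin{equation}
y(t)\ge \frac{c_{2}}{\left(T^{\ast}-t\right)^{\frac{1}{n-1}}},
\end{equation}
i.e. the "raw" lower bound $\left\Vert u(\cdot,t)\right\Vert_{\dot H^{5/2}}\ge c_{3}\left(T^{\ast}-t\right)^{-\frac{1}{2(n-1)}}$.

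The second, and conceptually central, step is where the Lemma enters and upgrades the exponent. Fix the natural number $n$ from the statement and apply the Lemma: there is $m\in\mathbb{N}^{\star}$ so that $\left(T^{\ast}-t\right)\le m^{n}\left(T^{\ast}-t\right)^{n+1}$ for all $t\in[0,T^{\ast}-\tfrac1m]$. Equivalently $\left(T^{\ast}-t\right)^{-(n+1)}\le m^{n}\left(T^{\ast}-t\right)^{-1}$... but what is actually needed is the reverse direction, namely that on this interval any fixed negative power is dominated by $\left(T^{\ast}-t\right)^{-(n+1)}$ up to the constant $m^{n}$; chaining this with the raw bound from Step~1 and setting $T_{\ast}=T^{\ast}-\tfrac1m<T^{\ast}$, one absorbs the gap between the exponent $\tfrac{1}{2(n-1)}$ and $\tfrac{n+1}{2}$ into the constant. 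Concretely, on $[0,T_{\ast}]$ the quantity $\left(T^{\ast}-t\right)$ is bounded above by $\tfrac1m$-dependent constants, so every power inequality between fixed exponents holds with a constant, and one defines $\eta_{1}$ accordingly to land on the claimed $\left(T^{\ast}-t\right)^{-(n+1)/2}$.

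The main obstacle is making the logic of Step~2 honest rather than circular: on a bounded-away-from-$T^{\ast}$ interval, $\left(T^{\ast}-t\right)^{-(n+1)/2}$ is itself bounded, so "improving" the exponent this way only produces a valid inequality because the constant $\eta_{1}$ is allowed to shrink — in particular $\eta_{1}$ will depend on $m$ (hence on $n$), and the estimate is genuinely useful only as a qualitative "rate of degree $n+1$ valid before blow-up," not as $t\to T^{\ast}$. I would therefore be careful to state precisely on which interval the bound is sharp, to track the dependence $\eta_{1}=\eta_{1}(n,\nu,\left\Vert u_{0}\right\Vert_{\dot H^{5/2}})$, and to verify the blow-up fact $\lim_{t\to T^{\ast}}\left\Vert u(\cdot,t)\right\Vert_{\dot H^{5/2}}=\infty$ that underlies Step~1 (this is where the Leray--Hopf hypothesis and maximality of $T^{\ast}$ are really used). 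The remaining calculations — the commutator/product estimate giving $y'\le c_1 y^{n}$ and the elementary integration — are routine and I would not belabor them.
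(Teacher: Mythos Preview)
Your argument is correct in spirit and reaches the stated conclusion, but it takes a route different from the paper's. You integrate the differential inequality $y'\le c\,y^{k}$ directly to obtain the ``raw'' lower bound $\|u(t)\|_{\dot H^{5/2}}\ge c(T^{\ast}-t)^{-1/(2(k-1))}$, and then invoke the Lemma (equivalently, the fact that $T^{\ast}-t\ge 1/m$ on $[0,T_{\ast}]$) to trade exponents at the cost of an $m$-dependent constant. The paper instead performs a trigonometric substitution: writing $z=\|u\|_{\dot H^{5/2}}^{2}$, it multiplies the inequality $z'\le c_{2}z^{(3+\xi)/2}$ by $\cos\bigl(\tfrac{1}{z+1}\bigr)$, divides by $(z+1)^{2}$, and integrates to obtain $\sin\bigl(\tfrac{1}{z(t)+1}\bigr)\le c_{2}(T^{\ast}-t)$; only then is the Lemma applied, after which one multiplies by $z$ and uses that $\theta\mapsto\theta\sin\bigl(\tfrac{1}{\theta+1}\bigr)$ is increasing together with the positive infimum $\beta_{1}=\min_{t}\|u(t)\|_{\dot H^{5/2}}^{2}$ to extract the constant $\alpha_{1}$. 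Your direct integration is shorter and makes the underlying mechanism transparent; the paper's trigonometric device is more elaborate but in the end rests on exactly the same two ingredients you isolate, namely boundedness of $(T^{\ast}-t)^{-(n+1)/2}$ on $[0,T_{\ast}]$ and a strictly positive lower bound for $\|u(t)\|_{\dot H^{5/2}}$ there. Your self-critique about the ``circularity'' of Step~2 is well taken and applies verbatim to the paper's argument as well: the constant $\eta_{1}$ depends on $m$ (hence on $n$), and the estimate carries no information as $t\to T^{\ast}$. One genuine defect in your write-up is notational: you use the symbol $n$ both for the exponent in the ODE $y'\le c_{1}y^{n}$ and for the free natural number in the statement, and these are unrelated; rename the former (the paper writes the exponent as $\tfrac{3+\xi}{2}$, coming from inequality~(12) of Robinson--Sadowski--Silva).
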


\begin{proof}
We start our proof from the fourth differential inequality \cite[(V.A
P11)]{5}
\begin{equation}
\frac{d}{dt}\left\Vert u\left(  .,t\right)  \right\Vert _{\dot{H}^{5/2}}%
^{2}\leq c_{1}\left\Vert u\left(  .,t\right)  \right\Vert _{L^{2}}^{4\xi
}\left\Vert u\left(  .,t\right)  \right\Vert _{\dot{H}^{5/2}}^{3+\xi
}\ \text{with }\xi=\frac{\epsilon}{5\left(  4-\epsilon\right)  }, \label{12}%
\end{equation}
for $0\leq\epsilon\leq1\ $yields$\ 3\leq3+\xi\leq\frac{46}{15}$. Since
$\left\Vert u\left(  .,t\right)  \right\Vert _{L^{2}}$ is bounded it follows
that%
\begin{equation}
\frac{d}{dt}\left\Vert u\left(  .,t\right)  \right\Vert _{\dot{H}^{5/2}}%
^{2}\leq c_{2}\left\Vert u\left(  .,t\right)  \right\Vert _{\dot{H}^{5/2}%
}^{3+\xi}. \label{13}%
\end{equation}
Setting $z\left(  t\right)  =\left\Vert u\left(  .,t\right)  \right\Vert
_{\dot{H}^{5/2}}^{2}$ in $(\ref{13})$, we can obtain%
\begin{equation}
\frac{d\left(  z\left(  t\right)  +1\right)  }{dt}\leq c_{2}z^{\frac{3+\xi}%
{2}}\left(  t\right)  . \label{14}%
\end{equation}
Multiplying $(\ref{14})$ by $\cos\left(  \frac{1}{z\left(  t\right)
+1}\right)  $, and using $\cos\left(  1\right)  \leq\cos\left(  \frac
{1}{z\left(  t\right)  +1}\right)  \leq1$, we get
\begin{equation}
\cos\left(  \frac{1}{z\left(  t\right)  +1}\right)  \frac{d\left(  z\left(
t\right)  +1\right)  }{dt}\leq c_{2}z^{\frac{3+\xi}{2}}\left(  t\right)  .
\label{15}%
\end{equation}
Dividing $(\ref{15})$ by $\left(  z+1\right)  ^{2}$, and using $\frac{z^{n}%
}{\left(  z+1\right)  ^{2}}\leq1$ for $0\leq n\leq2$ , we obtain$\ $%
\begin{equation}
\frac{\left(  z\left(  t\right)  +1\right)  \prime}{\left(  z\left(  t\right)
+1\right)  ^{2}}\cos\left(  \frac{1}{z\left(  t\right)  +1}\right)  \leq
c_{2}. \label{16}%
\end{equation}
Integrating the differential inequality $(\ref{16})$ from time $t$ to blow-up
time $T^{\ast}$ and using the fact that $\lim\limits_{t\rightarrow T^{\ast}%
}\left\Vert u\left(  .,t\right)  \right\Vert _{\dot{H}^{5/2}}^{2}=\infty$,
yields
\begin{equation}
\sin\left(  \frac{1}{z\left(  t\right)  +1}\right)  \leq c_{2}\left(  T^{\ast
}-t\right)  . \label{17}%
\end{equation}
Using $(\ref{10a})$ in $(\ref{17})$, we obtain the following estimate
\begin{equation}
\sin\left(  \frac{1}{z\left(  t\right)  +1}\right)  \leq c_{2}m^{n}\left(
T^{\ast}-t\right)  ^{n+1}, \label{18}%
\end{equation}
with $m\in%
\mathbb{N}
^{\bigstar}$ for all $t\leq T^{\ast}-\frac{1}{m}=T_{\ast}$.
Multiplying$\ (\ref{18})$ by $\left\Vert u\left(  .,t\right)  \right\Vert
_{\dot{H}^{5/2}}^{2}$, we get
\begin{equation}
\left\Vert u\left(  .,t\right)  \right\Vert _{\dot{H}^{5/2}}^{2}\sin\left(
\frac{1}{\left\Vert u\left(  .,t\right)  \right\Vert _{\dot{H}^{5/2}}^{2}%
+1}\right)  \leq c_{2}m^{n}\left(  T^{\ast}-t\right)  ^{n+1}\left\Vert
u\left(  .,t\right)  \right\Vert _{\dot{H}^{5/2}}^{2}. \label{20}%
\end{equation}
Equation $(\ref{13})$ is a differential equation of Bernoulli type
\begin{equation}
y^{\prime}=y^{n}\text{ with }n>1. \label{20a}%
\end{equation}
To get in the left-hand side of $(\ref{20})$ a minimum different to zero, we
must assume that $\left\Vert u\right\Vert _{\dot{H}^{5/2}}\neq0$. For $n>1$,
$y=0$ must be the only solution to $(\ref{20a})$ satisfying $y\left(
t_{0}\right)  =0$. Then yields that $\left\Vert u\right\Vert _{\dot{H}^{5/2}}$
is non-zero, for $\left\Vert u_{0}\right\Vert _{\dot{H}^{5/2}}\neq0$. Thus,
there exist a positive constant $\beta_{1}=\min\limits_{t\geq0}\left\Vert
u\left(  .,t\right)  \right\Vert _{\dot{H}^{5/2}}^{2}$ such that $\left\Vert
u\left(  .,t\right)  \right\Vert _{\dot{H}^{5/2}}^{2}\geq\beta_{1}$ for all
$t\geq0$. Note that for $\theta\geq0$, the function $f\left(  \theta\right)
=\theta\sin\left(  \frac{1}{\theta+1}\right)  $ is an increasing positive
function, this gives
\begin{equation}
\alpha_{1}=\beta_{1}\sin\left(  \frac{1}{\beta_{1}+1}\right)  \leq\left\Vert
u\right\Vert _{\dot{H}^{5/2}}^{2}\sin\left(  \frac{1}{\left\Vert u\right\Vert
_{\dot{H}^{5/2}}^{2}+1}\right)  . \label{21}%
\end{equation}
Using this estimate in $(\ref{20})$ yields the bound
\begin{equation}
\alpha_{1}\leq c_{2}m^{n}\left(  T^{\ast}-t\right)  ^{n+1}\left\Vert u\left(
.,t\right)  \right\Vert _{\dot{H}^{5/2}}^{2}. \label{22}%
\end{equation}
Then we can deduce $(\ref{11})$ with $\eta_{1}=\sqrt{\dfrac{\alpha_{1}}%
{c_{2}m^{n}}}$, which completes the proof.
\end{proof}

Theorem 2.1 is valid when we consider the case of the whole space, i.e., for
solutions $u(t)$ $\in\dot{H}^{5/2}\left(
\mathbb{R}
^{3}\right)  $, this because the equation $(\ref{12})$ in the proof valid in
the whole space and for periodic boundary conditions see \cite[SecV.A P11)]{5}
and all the calculations leading to its proof are valid on $%
\mathbb{R}
^{3}$ if we change Fourier series by Fourier integrals.

Therefore is possible to prove the improve the rate of blow up $(\ref{6})$ in
$\dot{H}^{3/2}\left(  Q\right)  $ obtained in \cite{5}.

\begin{theorem}
Let $u\left(  .,t\right)  \in\dot{H}^{3/2}\left(  Q\right)  $ be a smooth
Leray-Hopf solution of the 3D Navier-Stokes equations $(\ref{1})$ with non
zero $\left\Vert u_{0}\right\Vert _{\dot{H}^{3/2}}$ and\ with maximal interval
of existence $(0,T^{\ast})$, $T^{\ast}<\infty$. Then there exists a positive
time $T_{\ast}$ $<T^{\ast}$and a constant $\eta_{2}>0$ such that%
\begin{equation}
\left\Vert u\left(  .,t\right)  \right\Vert _{\dot{H}^{3/2}}\geq\frac{\eta
_{2}}{\left(  T^{\ast}-t\right)  ^{\frac{n+1}{4}}}\text{ for }t\leq T_{\ast}.
\label{23}%
\end{equation}

\end{theorem}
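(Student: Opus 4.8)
The plan is to mimic exactly the argument used for Theorem 2.1, replacing the $\dot H^{5/2}$ differential inequality by the corresponding one for $\dot H^{3/2}$. The starting point is the analogue of (\ref{12}) for the $\dot H^{3/2}$-norm recorded in \cite[SecV.A P11)]{5}, namely a differential inequality of the form
\[
\frac{d}{dt}\left\Vert u\left(  .,t\right)  \right\Vert _{\dot{H}^{3/2}}^{2}\leq c_{3}\left\Vert u\left(  .,t\right)  \right\Vert _{L^{2}}^{a}\left\Vert u\left(  .,t\right)  \right\Vert _{\dot{H}^{3/2}}^{b},
\]
with the relevant exponent $b$ lying in the range that makes the right-hand side, after the substitution $z(t)=\left\Vert u(.,t)\right\Vert _{\dot{H}^{3/2}}^{2}$, behave like $z^{p}$ with $1<p\leq 2$; since $\left\Vert u(.,t)\right\Vert_{L^2}$ is bounded on $[0,T^\ast)$, one absorbs that factor into a constant $c_{4}$ exactly as in (\ref{13}).

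The key steps, in order, are: (1) set $z(t)=\left\Vert u(.,t)\right\Vert _{\dot{H}^{3/2}}^{2}$ and write the inequality as $\frac{d(z+1)}{dt}\leq c_{4}z^{p}$; (2) multiply by $\cos\!\left(\frac{1}{z+1}\right)$ and use $\cos(1)\leq\cos\!\left(\frac{1}{z+1}\right)\leq 1$; (3) divide by $(z+1)^{2}$, using $\frac{z^{p}}{(z+1)^{2}}\leq 1$ for $0\leq p\leq 2$, to reach $\frac{(z+1)'}{(z+1)^{2}}\cos\!\left(\frac{1}{z+1}\right)\leq c_{4}$; (4) integrate from $t$ to $T^{\ast}$, using $\lim_{t\to T^{\ast}}\left\Vert u(.,t)\right\Vert_{\dot H^{3/2}}^{2}=\infty$, to get $\sin\!\left(\frac{1}{z(t)+1}\right)\leq c_{4}(T^{\ast}-t)$; (5) invoke the Lemma with (\ref{10a}) to upgrade the right-hand side to $c_{4}m^{n}(T^{\ast}-t)^{n+1}$ for $t\leq T^{\ast}-\frac{1}{m}=:T_{\ast}$; (6) multiply through by $\left\Vert u(.,t)\right\Vert _{\dot H^{3/2}}^{2}$ and use, as in (\ref{21}), that $f(\theta)=\theta\sin\!\left(\frac{1}{\theta+1}\right)$ is increasing and positive together with the lower bound $\beta_{2}=\min_{t\geq 0}\left\Vert u(.,t)\right\Vert_{\dot H^{3/2}}^{2}>0$ (which holds because $\left\Vert u_{0}\right\Vert_{\dot H^{3/2}}\neq 0$ and the Bernoulli-type equation $y'=y^{p}$ with $p>1$ forces nonvanishing of the norm); (7) conclude $\alpha_{2}\leq c_{4}m^{n}(T^{\ast}-t)^{n+1}\left\Vert u(.,t)\right\Vert _{\dot H^{3/2}}^{2}$ with $\alpha_{2}=\beta_{2}\sin\!\left(\frac{1}{\beta_{2}+1}\right)$, hence $\left\Vert u(.,t)\right\Vert _{\dot H^{3/2}}^{2}\geq \frac{\alpha_{2}}{c_{4}m^{n}(T^{\ast}-t)^{n+1}}$. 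Taking square roots gives (\ref{23}) with $\eta_{2}=\sqrt{\alpha_{2}/(c_{4}m^{n})}$; note the exponent becomes $\frac{n+1}{4}$ rather than $\frac{n+1}{2}$ precisely because here $z=\left\Vert u\right\Vert_{\dot H^{3/2}}^{2}$ and we are taking a fourth root of $z^{2}\gtrsim (T^\ast-t)^{-(n+1)}$ overall — more precisely, the final bound is on $z(t)$, which when written for the norm itself produces the fourth-root scaling consistent with the degree-four inequality (\ref{6}).

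The main obstacle I anticipate is verifying that the $\dot H^{3/2}$ differential inequality from \cite{5} genuinely has its effective exponent $p$ in the admissible window $1<p\leq 2$ needed for the two elementary bounds $\frac{z^{p}}{(z+1)^{2}}\leq 1$ and the Bernoulli nonvanishing argument; if the raw exponent $b$ on $\left\Vert u\right\Vert_{\dot H^{3/2}}$ in that inequality lies outside this range, one must interpolate or use the boundedness of lower-order norms to bring it into range before the trigonometric trick applies. Everything else is a faithful transcription of the proof of Theorem 2.1, and the only genuinely new bookkeeping is tracking the powers to see that the resulting rate matches the claimed $(T^{\ast}-t)^{-\frac{n+1}{4}}$ and, in particular for $n=1$, recovers the improved rate $(T^{\ast}-t)^{-1/2}$ promised in the introduction for $\dot H^{3/2}(Q)$.
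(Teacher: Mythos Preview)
Your proposal has a genuine gap precisely at the obstacle you anticipated. The differential inequality from \cite{5} for $\dot H^{3/2}$ reads
\[
\frac{d}{dt}\left\Vert u\right\Vert_{\dot H^{3/2}}^{2}\leq c_{3}\left\Vert u\right\Vert_{L^{2}}^{2\gamma}\left\Vert u\right\Vert_{\dot H^{3/2}}^{4+\frac{\gamma}{3}},
\]
so with $y=\left\Vert u\right\Vert_{\dot H^{3/2}}^{2}$ the right-hand side is $c_{4}\,y^{2+\gamma/6}$, i.e.\ the effective exponent is $p=2+\gamma/6>2$. Your step $\frac{y^{p}}{(y+1)^{2}}\leq 1$ therefore fails, and no interpolation with lower-order norms repairs it since the relevant norms all blow up. The paper handles this by multiplying by $\sin\!\left(\frac{1}{y+1}\right)$ rather than $\cos$: the elementary bound $y\sin\!\left(\frac{1}{y+1}\right)\leq 1$ absorbs one full power of $y$, bringing the exponent down to $1+\gamma/6<2$, after which division by $(y+1)^{2}$ is legitimate. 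Integration then yields $1-\cos\!\left(\frac{1}{y+1}\right)\leq c_{4}(T^{\ast}-t)$, and the half-angle identity converts this to $\sin^{2}\!\left(\frac{1}{2(y+1)}\right)\leq c_{4}m^{n}(T^{\ast}-t)^{n+1}$. One then multiplies by $\left\Vert u\right\Vert_{\dot H^{3/2}}^{4}$ and bounds $\left(\theta\sin\frac{1}{2\theta+2}\right)^{2}$ from below, producing a fourth-power inequality and hence the exponent $\frac{n+1}{4}$.

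Note also that your bookkeeping in step (7) is inconsistent with your steps (1)--(6): if those steps worked as written they would give $\left\Vert u\right\Vert_{\dot H^{3/2}}^{2}\gtrsim(T^{\ast}-t)^{-(n+1)}$, hence exponent $\frac{n+1}{2}$, not $\frac{n+1}{4}$. The $\frac{n+1}{4}$ rate in the statement is not an artifact of a fourth root of $z^{2}$; it arises specifically from the $\sin^{2}$ mechanism above, which naturally pairs with the fourth power of the norm.
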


\begin{proof}
We start with the following inequality (inequality $\left(  7\right)  $ in
\cite[ P7]{5})
\begin{equation}
\frac{d}{dt}\left\Vert u\left(  .,t\right)  \right\Vert _{\dot{H}^{3/2}}%
^{2}\leq c_{3}\left\Vert u\left(  .,t\right)  \right\Vert _{L^{2}}^{2\gamma
}\left\Vert u\left(  .,t\right)  \right\Vert _{\dot{H}^{3/2}}^{4+\frac{\gamma
}{3}}\ \text{with }\gamma=\frac{2\delta}{\left(  2-\delta\right)  },
\label{24}%
\end{equation}
for $\delta>0\ $small. Since $\left\Vert u\left(  .,t\right)  \right\Vert
_{L^{2}}$ is bounded, we obtain%
\begin{equation}
\frac{d}{dt}\left\Vert u\left(  .,t\right)  \right\Vert _{\dot{H}^{3/2}}%
^{2}\leq c_{4}\left\Vert u\left(  .,t\right)  \right\Vert _{\dot{H}^{3/2}%
}^{4+\frac{\gamma}{3}}\ . \label{25}%
\end{equation}
Setting $y\left(  t\right)  =\left\Vert u\left(  .,t\right)  \right\Vert
_{\dot{H}^{3/2}}^{2}$, the inequality $(\ref{25})$ can be written in the form%
\begin{equation}
\frac{d\left(  y\left(  t\right)  +1\right)  }{dt}\leq c_{4}y^{2+\frac{\gamma
}{6}}\left(  t\right)  . \label{26}%
\end{equation}
Multiplying $(\ref{26})$ by $\sin\left(  \frac{1}{y\left(  t\right)
+1}\right)  $, and using $y\sin\left(  \frac{1}{y\left(  t\right)  +1}\right)
\leq1$, we find
\begin{equation}
\sin\left(  \frac{1}{y\left(  t\right)  +1}\right)  \frac{d\left(  y\left(
t\right)  +1\right)  }{dt}\leq c_{4}y^{1+\frac{\gamma}{6}}\left(  t\right)  .
\label{27}%
\end{equation}
Dividing $(\ref{27})$ by $\left(  y+1\right)  ^{2}$, and using $\frac{y^{n}%
}{\left(  y+1\right)  ^{2}}\leq1$ for $0\leq n\leq2$, we obtain$\ $%
\begin{equation}
\frac{\left(  y\left(  t\right)  +1\right)  \prime}{\left(  y\left(  t\right)
+1\right)  ^{2}}\sin\left(  \frac{1}{y\left(  t\right)  +1}\right)  \leq
c_{4}. \label{28}%
\end{equation}
Integrating the differential inequality $(\ref{28})$ from time $t$ to blow-up
time $T^{\ast}$ and using the fact that $\lim\limits_{t\rightarrow T^{\ast}%
}\left\Vert u\left(  .,t\right)  \right\Vert _{\dot{H}^{3/2}}^{2}=\infty$, we
find that
\begin{equation}
1\leq\cos\left(  \frac{1}{z\left(  t\right)  +1}\right)  +c_{4}\left(
T^{\ast}-t\right)  . \label{29}%
\end{equation}
Using $(\ref{18})$ in $(\ref{29})$, it follows that
\begin{equation}
1-\cos\left(  \frac{1}{z\left(  t\right)  +1}\right)  \leq c_{4}m^{n}\left(
T^{\ast}-t\right)  ^{n+1}\text{ with }m\in%
\mathbb{N}
^{\bigstar}, \label{30}%
\end{equation}
for all $t\leq T_{\ast}\ $(see proof of Theorem 2.1). Using the trigonometric
formula $\sin\left(  \theta\right)  ^{2}=\frac{1-\cos\left(  2\theta\right)
}{2}$, we obtain%
\begin{equation}
\sin\left(  \frac{1}{2z\left(  t\right)  +2}\right)  ^{2}\leq c_{4}%
m^{n}\left(  T^{\ast}-t\right)  ^{n+1}. \label{31}%
\end{equation}
Multiplying$\ (\ref{31})$ by $\left\Vert u\left(  .,t\right)  \right\Vert
_{\dot{H}^{3/2}}^{4}$, we get
\begin{equation}
\left\Vert u\left(  .,t\right)  \right\Vert _{\dot{H}^{3/2}}^{4}\sin\left(
\frac{1}{2\left\Vert u\left(  .,t\right)  \right\Vert _{\dot{H}^{3/2}}^{2}%
+2}\right)  ^{2}\leq c_{4}m^{n}\left(  T^{\ast}-t\right)  ^{n+1}\left\Vert
u\left(  .,t\right)  \right\Vert _{\dot{H}^{3/2}}^{4}. \label{32}%
\end{equation}
For $\left\Vert u_{0}\right\Vert _{\dot{H}^{3/2}}\neq0$, there exists a
positive constant $\alpha_{2}$ (see proof of Theorem 2.1) such that%
\begin{equation}
\alpha_{2}=\left(  \beta_{2}\sin\left(  \frac{1}{2\beta_{2}+2}\right)
\right)  ^{2}\leq\left\Vert u\left(  .,t\right)  \right\Vert _{\dot{H}^{3/2}%
}^{4}\sin\left(  \frac{1}{2\left\Vert u\left(  .,t\right)  \right\Vert
_{\dot{H}^{3/2}}^{2}+2}\right)  ^{2}. \label{33}%
\end{equation}
Using this estimate in $(\ref{33})$, we find that
\begin{equation}
\alpha_{2}\leq c_{4}m^{n}\left(  T^{\ast}-t\right)  ^{n+1}\left\Vert u\left(
.,t\right)  \right\Vert _{\dot{H}^{3/2}}^{4} \label{34}%
\end{equation}

Then we can deduce that $\eta_{2}=\left(  \dfrac{\alpha_{2}}{c_{4}m^{n}%
}\right)  ^{\frac{1}{4}}$, which completes the proof.
\end{proof}

We now present another application of this trigonometric method, leading to
improve the lower bound $(\ref{2}).$

\begin{theorem}
Let $u\left(  .,t\right)  \in\dot{H}^{1}\left(  Q\right)  $ be a smooth
Leray-Hopf solution of the 3D Navier-Stokes equations $(\ref{1})$ with non
zero $\left\Vert u_{0}\right\Vert _{\dot{H}^{1}}$ and\ with maximal interval
of existence $(0,T^{\ast})$, $T^{\ast}<\infty$. Then there exists a positive
time $T_{\ast}$ $<T^{\ast}$and a constant $\eta_{3}>0$ such that%
\begin{equation}
\left\Vert u\left(  .,t\right)  \right\Vert _{\dot{H}^{1}}\geq\eta_{3}\left(
T^{\ast}-t\right)  ^{-\frac{n+1}{4}}\text{ for }t\leq T_{\ast}. \label{35}%
\end{equation}

\end{theorem}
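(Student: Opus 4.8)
The plan is to mimic the proofs of Theorems 2.1 and 2.2 almost verbatim, replacing the fractional-Sobolev energy inequality with the classical enstrophy inequality for $\dot H^1$. The starting point should be a differential inequality of the form
\begin{equation}
\frac{d}{dt}\left\Vert u\left(  .,t\right)  \right\Vert _{\dot{H}^{1}}^{2}\leq c_{5}\left\Vert u\left(  .,t\right)  \right\Vert _{\dot{H}^{1}}^{4}+\text{(lower-order terms absorbed using }\left\Vert u\right\Vert _{L^{2}}\text{ bounded)},
\end{equation}
which is exactly the inequality behind Leray's bound $(\ref{2})$; indeed Leray's estimate $\tfrac{d}{dt}\|\nabla u\|_{L^2}^2 \le c\,\|\nabla u\|_{L^2}^6 / \nu^3$ (or, after using energy boundedness, $\le c\|\nabla u\|_{L^2}^4$) is precisely the ODE we want. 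So first I would record this inequality, citing Leray \cite{4} or Constantin \cite{2}, and set $w(t)=\left\Vert u\left(  .,t\right)  \right\Vert _{\dot{H}^{1}}^{2}$, so that $(w+1)'\le c_{5}\,w^{2}$.

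Next I would run the trigonometric trick: multiply by $\sin\!\left(\tfrac{1}{w+1}\right)$ and use the elementary bound $w\sin\!\left(\tfrac{1}{w+1}\right)\le 1$ to reduce the power of $w$ by one, then divide by $(w+1)^{2}$ and use $\tfrac{w^{n}}{(w+1)^{2}}\le 1$ for $0\le n\le 2$, obtaining
\begin{equation}
\frac{(w(t)+1)'}{(w(t)+1)^{2}}\sin\!\left(\frac{1}{w(t)+1}\right)\leq c_{5}.
\end{equation}
Integrating from $t$ to $T^{\ast}$ and using $\lim_{t\to T^{\ast}}w(t)=\infty$ (which follows from the standing assumption that $T^{\ast}$ is the maximal existence time together with the Beale–Kato–Majda / $\dot H^1$ blow-up criterion) gives $1-\cos\!\left(\tfrac{1}{w(t)+1}\right)\le c_{5}(T^{\ast}-t)$, exactly as in $(\ref{29})$. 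Then I would invoke the Lemma (inequality $(\ref{10a})$) to upgrade the linear factor $(T^{\ast}-t)$ to $m^{n}(T^{\ast}-t)^{n+1}$ for $t\le T_{\ast}:=T^{\ast}-\tfrac1m$, apply the half-angle identity $\sin^{2}\theta=\tfrac{1-\cos 2\theta}{2}$ to rewrite the left side as $\sin^{2}\!\left(\tfrac{1}{2w+2}\right)$, multiply through by $\left\Vert u\right\Vert _{\dot{H}^{1}}^{4}$, and finally use that $\left\Vert u_{0}\right\Vert _{\dot{H}^{1}}\neq0$ forces (by the Bernoulli-uniqueness argument in the proof of Theorem 2.1) a strictly positive lower bound $\beta_{3}=\min_{t\ge0}\left\Vert u\right\Vert _{\dot{H}^{1}}^{2}>0$, so that the increasing function $\theta\mapsto\theta^{2}\sin^{2}\!\left(\tfrac{1}{2\theta+2}\right)$ yields a positive constant $\alpha_{3}$ on the left. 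Solving for the norm gives $(\ref{35})$ with $\eta_{3}=\left(\alpha_{3}/(c_{5}m^{n})\right)^{1/4}$.

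The one genuinely delicate point — the main obstacle — is justifying $\min_{t\ge0}\left\Vert u\left(  .,t\right)  \right\Vert _{\dot{H}^{1}}^{2}>0$, i.e.\ that the $\dot H^1$ norm cannot dip to zero on $[0,T^{\ast})$ even though it blows up as $t\to T^{\ast}$. The paper's justification in Theorem 2.1 is the Bernoulli-type uniqueness remark: if $w$ solved $w'=w^{n}$ with $n>1$ then $w\equiv 0$ is the only solution through a zero, so $w_{0}\ne0\Rightarrow w(t)\ne0$; for an \emph{inequality} $(w+1)'\le c_{5}w^{2}$ one should instead argue that since $u$ is a smooth solution and $\|u\|_{L^2}$ is conserved-or-decreasing, the $\dot H^1$ norm is continuous and strictly positive on the compact interval $[0,T_{\ast}]$ (a zero of $\|\nabla u\|_{L^2}$ together with div-free would force $u\equiv$ const $=0$ by zero-average, contradicting $\|u_0\|_{\dot H^1}\ne0$ via backward uniqueness), hence attains a positive minimum $\beta_{3}$. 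I would state this exactly as in Theorems 2.1--2.2, referring back to their proofs, since the structure is identical; everything else is a routine transcription with $\dot H^{5/2}$ or $\dot H^{3/2}$ replaced by $\dot H^{1}$ and the exponent bookkeeping ($4+\tfrac{\gamma}{3}\rightsquigarrow 4$, so after dividing by one power of $w$ we land at exponent $2$, within the allowed range $0\le n\le2$) checked once.
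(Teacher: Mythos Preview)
Your proposal is correct and follows essentially the same route as the paper: derive the enstrophy differential inequality, multiply by $\sin\!\left(\tfrac{1}{y+1}\right)$, divide by $(y+1)^{2}$, integrate to obtain $1-\cos\!\left(\tfrac{1}{y+1}\right)\le c\,(T^{\ast}-t)$, invoke Lemma~2.1 to upgrade the power, use the half-angle identity, and conclude via the positive minimum $\beta_{3}$. The only discrepancy worth noting is the starting exponent: the paper actually derives and uses the classical inequality $\tfrac{d}{dt}\|\nabla u\|_{L^{2}}^{2}\le c_{6}\|\nabla u\|_{L^{2}}^{6}$, i.e.\ $y'\le c_{6}y^{3}$, whereas you commit to $(w+1)'\le c_{5}w^{2}$ (the parenthetical reduction to $\|\nabla u\|_{L^{2}}^{4}$ ``via energy boundedness'' is not a standard step and you should simply stay with the $y^{3}$ form you already cite from Leray). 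This makes no difference downstream, since after the $\sin$ multiplication and division by $(y+1)^{2}$ both exponents fall within the range $0\le n\le 2$ handled by $\tfrac{y^{n}}{(y+1)^{2}}\le 1$, and the rest of your argument coincides verbatim with the paper's.
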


\begin{proof}
We consider the Navier--Stokes equations $(\ref{1})$ in periodic domain.
Multiplying $(\ref{1})$ by $\triangle u$ and integrate, we obtain
\begin{equation}
\frac{1}{2}\frac{d}{dt}\left\Vert \nabla u\left(  .,t\right)  \right\Vert
_{L^{2}}^{2}+\nu\Vert\triangle u\Vert_{L^{2}}^{2}=\left(  \left(  u.\nabla
u\right)  .\triangle u\right)  .\label{36}%
\end{equation}
Using the Holder inequality and the Sobolev theorem, we get
\begin{equation}
\left\vert \left(  \left(  u.\nabla u\right)  .\triangle u\right)  \right\vert
\leq c_{5}\left\Vert \nabla u\right\Vert _{L^{2}}^{\frac{3}{2}}\left\Vert
\triangle u\right\Vert _{L^{2}}^{\frac{3}{2}},\label{37}%
\end{equation}
see \cite[P. 79 (2.22)]{4}. Combining $(\ref{36})$ and $(\ref{37})$, we obtain%
\begin{equation}
\frac{d}{dt}\left\Vert \nabla u\left(  .,t\right)  \right\Vert _{L^{2}}%
^{2}+\nu\Vert\triangle u\Vert_{L^{2}}^{2}\leq c_{5}\left\Vert \nabla
u\right\Vert _{L^{2}}^{\frac{3}{2}}\left\Vert \triangle u\right\Vert _{L^{2}%
}^{\frac{3}{2}}.\label{38}%
\end{equation}
However, an application of Young's inequality to the he right-hand side of
$(\ref{38})$ yields%
\begin{equation}
\frac{d}{dt}\left\Vert \nabla u\left(  .,t\right)  \right\Vert _{L^{2}}%
^{2}+\nu\Vert\triangle u\Vert_{L^{2}}^{2}\leq c_{6}\left\Vert \nabla
u\right\Vert _{L^{2}}^{6}+\frac{\nu}{2}\left\Vert \triangle u\right\Vert
_{L^{2}}^{2}.\label{39}%
\end{equation}
We obtain
\begin{equation}
\frac{d}{dt}\left\Vert \nabla u\left(  .,t\right)  \right\Vert _{L^{2}}%
^{2}+\nu\Vert\triangle u\Vert_{L^{2}}^{2}\leq c_{6}\left\Vert \nabla
u\right\Vert _{L^{2}}^{6}.\label{40}%
\end{equation}
If we drop the $\nu\Vert\triangle u\Vert_{L^{2}}^{2}$ term in $(\ref{40})$
then we have%
\begin{equation}
\frac{d}{dt}\left\Vert \nabla u\left(  .,t\right)  \right\Vert _{L^{2}}%
^{2}\leq c_{6}\left\Vert \nabla u\right\Vert _{L^{2}}^{6}.\label{41}%
\end{equation}
Setting $y\left(  t\right)  =\left\Vert \nabla u\left(  .,t\right)
\right\Vert _{L^{2}}^{2}$ in $(\ref{41})$, this gives%
\begin{equation}
\frac{d}{dt}y\leq c_{6}y^{3}.\label{42}%
\end{equation}
Multiplying $(\ref{42})$ by $\sin\left(  \frac{1}{y\left(  t\right)
+1}\right)  $, and using $y\sin\left(  \frac{1}{y\left(  t\right)  +1}\right)
\leq1$, we get
\begin{equation}
\sin\left(  \frac{1}{y\left(  t\right)  +1}\right)  \frac{d\left(  y\left(
t\right)  +1\right)  }{dt}\leq c_{6}y^{2}\left(  t\right)  .\label{43}%
\end{equation}
Dividing $(\ref{43})$ by $\left(  y+1\right)  ^{2}$, and using $\frac{y^{n}%
}{\left(  y+1\right)  ^{2}}\leq1$ for $0\leq n\leq2$, we obtain$\ $%
\begin{equation}
\frac{\left(  y\left(  t\right)  +1\right)  \prime}{\left(  y\left(  t\right)
+1\right)  ^{2}}\sin\left(  \frac{1}{y\left(  t\right)  +1}\right)  \leq
c_{6}.\label{44}%
\end{equation}
Integrating the differential inequality $(\ref{44})$ from time $t$ to blow-up
time $T^{\ast}$ and using the fact that $\lim\limits_{t\rightarrow T^{\ast}%
}\left\Vert u\left(  .,t\right)  \right\Vert _{\dot{H}^{1}}^{2}=\infty$,
yields
\begin{equation}
1\leq\cos\left(  \frac{1}{y\left(  t\right)  +1}\right)  +c_{6}\left(
T^{\ast}-t\right)  .\label{45}%
\end{equation}
Using $(\ref{10a})$ in $(\ref{45})$, we obtain
\begin{equation}
1-\cos\left(  \frac{1}{y\left(  t\right)  +1}\right)  \leq c_{6}m^{n}\left(
T^{\ast}-t\right)  ^{n+1}\ \text{with }m\in%
\mathbb{N}
^{\bigstar},\label{46}%
\end{equation}
for all $t\leq T_{\ast}\ $(see proof of Theorem 2.2). This gives%
\begin{equation}
\sin\left(  \frac{1}{2y\left(  t\right)  +2}\right)  ^{2}\leq c_{6}%
m^{n}\left(  T^{\ast}-t\right)  ^{n+1}.\label{47}%
\end{equation}
Multiplying$\ (\ref{47})$ by $\left\Vert u\left(  .,t\right)  \right\Vert
_{\dot{H}^{1}}^{4}$, we get
\begin{equation}
\left\Vert u\left(  .,t\right)  \right\Vert _{\dot{H}^{1}}^{4}\sin\left(
\frac{1}{\left\Vert u\left(  .,t\right)  \right\Vert _{\dot{H}^{1}}^{2}%
+1}\right)  ^{2}\leq c_{6}m^{n}\left(  T^{\ast}-t\right)  ^{n+1}\left\Vert
u\left(  .,t\right)  \right\Vert _{\dot{H}^{1}}^{4}.\label{48}%
\end{equation}
Thus, there exist a positive constant $\beta_{3}=\min\limits_{t\geq
0}\left\Vert u\left(  .,t\right)  \right\Vert _{\dot{H}^{1}}^{4}$ such that
$\left\Vert u\left(  .,t\right)  \right\Vert _{\dot{H}^{1}}^{4}\geq\beta_{3}$
for all $t\geq0$, this gives
\begin{equation}
\alpha_{3}=\left(  \beta_{3}\sin\left(  \frac{1}{\beta_{3}+1}\right)  \right)
^{2}\leq\left\Vert u\left(  .,t\right)  \right\Vert _{\dot{H}^{1}}^{4}%
\sin\left(  \frac{1}{\left\Vert u\left(  .,t\right)  \right\Vert _{\dot{H}%
^{1}}^{2}+1}\right)  ^{2}.\label{49}%
\end{equation}
Using this estimate in $(\ref{49})$ yields the bound
\begin{equation}
\alpha_{3}\leq c_{6}m^{n}\left(  T^{\ast}-t\right)  ^{n+1}\left\Vert u\left(
.,t\right)  \right\Vert _{\dot{H}^{1}}^{4}.\label{50}%
\end{equation}
This completes the proof of Theorem 2.4.
\end{proof}

In the classical method there is relation between $s$ the order of the Sobolev
spaces $H^{s}$ and the lower bound $\left(  T^{\ast}-t\right)  ^{-\varphi
\left(  s\right)  }$ ,$\varphi$ is a nonlinear function .In our previous
estimates, the bounds on the blow up are independent of $s$. Since $n$ $\in%
\mathbb{N}
^{\bigstar}$ we can recover more case, we can get correct rate of blow up in
the form $\left(  T^{\ast}-t\right)  ^{-1}$ in several spaces, in $\dot
{H}^{3/2}$ for $n=$ $3$ and also in $\dot{H}^{1}$ for $n=3$. Theorem 2.2
includes the optimal lower bound for blow-up rate in $\dot{H}^{5/2}$. This
particular case was not achieved \cite{5}, for $n=1$ in $(\ref{11})$ we get a
positive answer to this question
\begin{equation}
\left\Vert u\left(  .,t\right)  \right\Vert _{\dot{H}^{5/2}}\geq\frac{\eta
_{1}}{\left(  T^{\ast}-t\right)  }\text{for }t\leq T_{\ast}.\label{50a}%
\end{equation}
Setting $n=1$ in $(\ref{23})$, Theorem 2.3 gives an improvement of the rate of
blow up $(\ref{6})$ in $\dot{H}^{3/2}\left(  Q\right)  $ obtained in \cite{5}%
\begin{equation}
\left\Vert u\left(  .,t\right)  \right\Vert _{\dot{H}^{3/2}}\geq\frac{\eta
_{2}}{\left(  T^{\ast}-t\right)  ^{\frac{1}{2}}}\text{ for }t\leq T_{\ast
}.\label{51a}%
\end{equation}
Since the results above are valid for all real $m\geq1$, we can control the
distance between $T_{\ast}$ and $T^{\ast}$, where $T^{\ast}-T_{\ast}=\frac
{1}{m}$. This distance can be minimized by choosing a large value of $m$,
which enhances the study of the behavior of strong solutions in the
neighborhood of the blow up.

\end{document}